   \newcommand{\Irr}{\operatorname{Irr}}
   \newcommand{\sym}{\mathfrak{S}}
   \newcommand{\Z}{\mathbb{Z}}
   \newcommand{\la}{\lambda}
 \newcommand{\ga}{\gamma}
  \newcommand{\p}{\bar{p}}
  \newcommand{\tis}{s_0}
  \newcommand{\tit}{t_0}
\long\def\symbolfootnote[#1]#2{\begingroup\def\thefootnote{\fnsymbol{footnote}}
\footnote[#1]{#2}\endgroup}
\newtheorem{theorem}{Theorem}[section] 
\newtheorem{corollary}[theorem]{Corollary}
\newtheorem{remark}[theorem]{Remark}
\title[Generalized results on core partitions]
   {On core and bar-core partitions}
\author{Jean-Baptiste Gramain}
\address{Institut de Math\'ematiques de Jussieu\\
Universit\'e Denis Diderot, Paris VII\\
UFR de Math\'ematiques\\
2 place Jussieu\\
F-75251 Paris Cedex 05\\
email: gramain@math.jussieu.fr}
\author{Rishi Nath}
\address{Department of Mathematics and Computer Science\\
York College\\
City University of New York\\
94-20 Guy R. Brewer Blvd\\
Jamaica, NY 11418\\
email: rnath@york.cuny.edu}
\begin{document}
\maketitle
\begin{abstract}
If $s$ and $t$ are relatively prime J. Olsson proved in \cite{Olsson-cores} that the $s$-core of a $t$-core partition is again
a $t$-core partition, and that the $s$-bar-core of a $t$-bar-core partition is again a $t$-bar-core partition.
Here generalized results are proved for partitions and bar-partitions when the restriction that $s$ and $t$ be relatively prime
is removed. 
\end{abstract}

\symbolfootnote[0]{2000 Mathematics Subject Classification 20C30 (primary), 20C15, 20C20 (secondary)}

\section{Introduction}\label{intro}
The basic facts about partitions, hooks and blocks can be found in \cite[Chapter 2]{James-Kerber} or \cite[Chapter 1]{Olsson-Combinatorics}. We recall a few key definitions here. 
A partition $\lambda$ of $n$ is defined as a non-increasing sequence of nonnegative integers $(\lambda_1, \lambda_2,\cdots)$ that sum to $n.$ A partition is represented
graphically by its Young diagram $[\lambda]$, which consists of the set of {\emph{nodes}} $\{(i, \, j) \, | \, (i, \, j)\in \mathbb{N}^2, \,  j\leq\lambda_i\}$. The node $(i, \, j)$ is in the $i$th row and $j$th column of $[\lambda]$. The rows of $[\lambda]$ are labelled from top to bottom, while its columns are labelled from left to right.

To each node $(i, \, j)$ in $[\lambda]$ we associate the {\emph{hook}} $h_{ij}$ of $\lambda$, which consists of the node $(i, \, j)$ itself, together with all the nodes $\{ (i, \, k) \, | \, j <k  \}$ in $[\lambda]$ (i.e. in the same row as and to the right of $(i, \, j)$), and all the nodes $\{ (\ell, \, j) \, | \, i < \ell \}$ (i.e. in the same column as and below $(i, \, j)$). The {\emph{length}} of $h_{ij}$ is the total number of nodes contained in the hook. For any integer $\ell \geq 1$, we call {\emph{$\ell$-hook}} a hook of length $\ell$, and {\emph{$(\ell)$-hook}} a hook of length divisible by $\ell$. The information about the $(\ell)$-hooks in $\lambda$ is encoded in the {\emph{$\ell$-quotient}} $q_{\ell}(\lambda)=(\lambda_0, \, \ldots, \, \lambda_{\ell-1})$ of $\lambda$. The $\lambda_i$'s are partitions whose sizes sum to the number $w$ of $(\ell)$-hooks in $\lambda$ (called the {\emph{$\ell$-weight}} of $\lambda$).

The removal of an $\ell$-hook $h$ in $\lambda$ is obtained by removing the $\ell$ nodes of $[\lambda]$ in $h$, and migrating the disconnected nodes in $[\lambda]$ up and to the left. The result is a partition of $n-\ell$ denoted by $\lambda \setminus h$. By removing all the $(\ell)$-hooks in $\lambda$, one obtains the {\emph{$\ell$-core}} $\gamma_{\ell}(\lambda)$ of $\lambda$. The partition $\gamma_{\ell}(\lambda)$ contains no $(\ell)$-hooks, and is uniquely determined by $\lambda$ (i.e. doesn't depend on the order in which we remove the $\ell$-hooks in $\lambda$). The partition $\lambda$ is entirely determined by its $\ell$-core and $\ell$-quotient.

\smallskip
It is well-known that the irreducible complex characters of the symmetric group $\sym_n$ are labelled by the partitions of $n$. If $p$ is a prime, then the distribution of irreducible characters of $\sym_n$ into $p$-blocks has a combinatorial description known as the Nakayama Conjecture: two characters $\chi_{\la}, \, \chi_{\mu} \in \Irr(\sym_n)$ belong to the same $p$-block if and only if $\la$ and $\mu$ have the same $p$-core (see \cite[Theorem 6.1.21]{James-Kerber}). Hence we define, for each integer $\ell \geq 1$, an {\emph{$\ell$-block of partitions}} of $n$ to be the set of all partitions of $n$ having a common given $\ell$-core.

\medskip
We now recall the analogous notions and results for bar-partitions, which can be found in \cite[Chapter 1]{Olsson-Combinatorics}. A {\emph{bar-partition}} is a partition $\lambda$ comprised of distinct parts. To each bar-partition we associate a shifted Young diagram $S(\lambda)$ obtained by shifting the $i$th row of the usual Young diagram $(i-1)$ positions to the right. The $j$-th node in the $i$-th row will be called the $(i,j)$-node. To each node $(i,j)$ in $S(\lambda)$, one can associate a {\it bar} and {\it bar-length}. For any odd integer $\ell$, a bar-partition $\la$ is entirely determined by its {\emph{$\bar{\ell}$-core}} $\bar{\ga}_{\ell}(\la)$ and its {\emph{$\bar{\ell}$-quotient}} $\bar{q}_{\ell}(\la)$. The {\it bar-core} $\bar{\ga}_{\ell}(\la)$ is obtained by removing from $\la$ all the bars of length divisible by $\ell$ (called $(\ell)$-bars). The {\it bar-quotient} of $\la$ is of the form $\bar{q}_{\ell}(\la)=(\la_0, \, \la_1, \,  \ldots \la_{(\ell-1)/2})$, where $\la_0$ is a bar-partition, $\la_1$, ..., $\la_{(\ell-1)/2}$ are partitions, and the sizes of the $\la_i$'s sum to the number of $(\ell)$-bars in $\la$ (called {\emph{$\bar{\ell}$-weight}} of $\la$).

\smallskip
It is well-known that the bar-partitions of $n$ label the faithful irreducible complex characters of the 2-fold covering group $\tilde{\sym}_n$ of $\sym_n$. These correspond to irreducible projective representations of $\sym_n$, and are known as {\emph{spin-characters}}. If $p$ is an odd prime, then the distribution of spin-characters of $\tilde{\sym}_n$ of positive defect into $p$-blocks has a combinatorial description known as the Morris Conjecture: two spin-characters of $\tilde{\sym}_n$ of positive defect belong to the same $p$-block if and only if the bar-partitions labelling them have the same $\p$-core (see \cite[Theorem 13.1]{Olsson-Combinatorics}).

In analogy with this, we define, for each odd integer $\ell \geq 1$, an {\emph{$\bar{\ell}$-block of partitions}} of $n$ to be the set of all bar-partitions of $n$ having a common given $\bar{\ell}$-core.

\section{Some new results on cores and bar-cores}

In this section, we generalize to arbitrary integers $s$ and $t$ the results on cores and bar-cores proved by J. B. Olsson in \cite{Olsson-cores} when $s$ and $t$ are coprime. 
Note that Olsson's result (\cite[Theorem 1]{Olsson-cores}) was interpreted by M. Fayers through alcove geometry and actions of the affine symmetric group (see \cite{Fayers}). It was also used by F. Garvan and A. Berkovich to bound the number of distinct values their partition statistic (the GBG-rank) can take on a $t$-core (mod $s$) (see \cite[Theorem 1.2]{Berkovich-Garvan}).

We keep the notation as in Section \ref{intro}.

\begin{theorem}\label{stcore}
For any two positive integers $s$ and $t$, the $s$-core of a $t$-core partition is again a $t$-core partition.

\end{theorem}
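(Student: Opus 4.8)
The plan is to translate everything into the language of $\beta$-sets and the abacus, where both core operations become simple compressions of bead configurations. First I would fix a bead number $b$ that is a multiple of $st$ and exceeds the number of parts of $\lambda$, and form the $\beta$-set $X=\{\lambda_i+b-i : 1\le i\le b\}\subseteq\Z_{\ge 0}$. The standard abacus dictionary supplies two facts. A partition is a $t$-core precisely when $X$ is \emph{down-closed modulo $t$}, i.e. $x\in X$ and $x\ge t$ imply $x-t\in X$ (on the $t$-runner abacus all beads sit at the top). And the $s$-core $\gamma_s(\lambda)$ is obtained by pushing the beads up on the $s$-runner abacus: writing $m_u=|X\cap(u+s\Z_{\ge 0})|$ for the number of beads in residue class $u$ modulo $s$, the $\beta$-set of $\gamma_s(\lambda)$ is the compressed set $X'=\{u+is : 0\le u<s,\ 0\le i<m_u\}$. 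Crucially, the counts $m_u$ agree for $X$ and $X'$, since removing an $s$-hook only slides a bead within its class modulo $s$. Because $b$ is a multiple of $t$, the down-closed-modulo-$t$ criterion is insensitive to the admissible choice of $b$, so it is legitimate to read off the $t$-core property of $\gamma_s(\lambda)$ directly from $X'$. I would stress that one cannot simply remove $s$-hooks one at a time—an intermediate partition need not be a $t$-core—so the compression must be performed globally.

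Next I would reduce the theorem to a single inequality on the numbers $m_u$: we must show that $X'$ is down-closed modulo $t$. Given a bead $u+is\in X'$ with $u+is\ge t$, its predecessor $u+is-t$ lies in class $u':=(u-t)\bmod s$, and requiring this predecessor to lie in $X'$ for every bead of column $u$ above $t$ amounts to the single height bound $m_u\le m_{u'}+e$, where $e:=(t-u+u')/s$ is a nonnegative integer (equal to $\lceil(t-u)/s\rceil$ when $u<t$, and to $0$ when $u\ge t$). Equivalently, setting $T_u=u+s\,m_u$, the requirement is $T_u-T_{u'}\le t$ for every $u$. Thus the whole statement rests on comparing the bead counts of two residue classes modulo $s$ linked by the shift-by-$t$ map $u\mapsto u-t$.

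The heart of the argument—and the step I expect to be the main obstacle—is deriving this bound from the $t$-core hypothesis on $X$. Here down-closedness modulo $t$ furnishes an injection: the map $x\mapsto x-t$ carries each bead of $X$ in class $u$ with $x\ge t$ to a bead of $X$ in class $u'$, whence $|\{x\in X\cap(u+s\Z_{\ge 0}) : x\ge t\}|\le m_{u'}$. Splitting $m_u$ into beads above and below $t$ then gives $m_u-m_{u'}\le a_u$, where $a_u=|\{x\in X\cap(u+s\Z_{\ge 0}) : x<t\}|$ counts the beads of class $u$ among the lowest $t$ positions. It remains only to bound $a_u$ by the number of positions available there: $a_u\le e$ (for $u<t$ this is $a_u\le\lceil(t-u)/s\rceil$, the count of positions $\equiv u$ below $t$, and for $u\ge t$ both sides vanish), an elementary observation. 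Chaining $m_u-m_{u'}\le a_u\le e$ yields exactly $m_u\le m_{u'}+e$, which completes the proof.

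Finally I would observe that coprimality of $s$ and $t$ is never invoked. The shift $u\mapsto u-t$ on $\Z/s$ breaks into several orbits when $\gcd(s,t)>1$, but the inequality $m_u\le m_{u'}+e$ is checked residue by residue and is wholly indifferent to the orbit structure. This is precisely why Olsson's coprimality hypothesis can be dropped, and the same $\beta$-set-and-injection strategy should transfer, with the shifted Young diagram and bar-lengths replacing hooks, to the bar-partition version.
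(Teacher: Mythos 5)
Your proof is correct, but it takes a genuinely different route from the paper's. You argue directly on the $s$-runner abacus: the map $x \mapsto x-t$ on beads $x \ge t$ is a well-defined injection into $X$ by the $t$-core hypothesis, and combined with the trivial bound $a_u \le e$ on the number of available positions below $t$ in each residue class, it gives $m_u \le m_{u'} + e$, which is exactly the down-closedness modulo $t$ of the compressed set $X'$; every step is elementary, and no earlier result is invoked --- in particular your argument reproves Olsson's coprime theorem rather than citing it, and, as you observe, coprimality never enters. The paper instead reduces to the coprime case: setting $g=\gcd(s,t)$, $s_0=s/g$, $t_0=t/g$, it uses the canonical bijection between hooks of $\lambda$ of length divisible by $g$ and hooks of the $g$-quotient $q_g(\lambda)$, notes that each component of $q_g(\lambda)$ is a $t_0$-core and that removing all $s$-hooks of $\lambda$ amounts to taking the $s_0$-core of each component, and then applies Olsson's theorem componentwise, since $\gcd(s_0,t_0)=1$. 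What the paper's route buys is uniformity: the identical quotient argument, run through the bar-quotient bijection, yields Theorem \ref{stbarcore} for bar-cores with no new work. Your route buys self-containedness and an explicit quantitative criterion ($T_u - T_{u'} \le t$ for all residues $u$), but your closing claim that the same strategy ``should transfer'' to bar-partitions is the one optimistic point: removing a bar of length divisible by $\ell$ is not merely sliding a bead down a runner --- it can also delete two parts whose lengths sum to a multiple of $\ell$ --- so both the down-closed criterion and the compression description require genuine modification in the bar setting, which is precisely the complication the paper's quotient reduction sidesteps.
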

\begin{remark}
This result was proved by J. B. Olsson in \cite{Olsson-cores}, under the extra hypothesis that $s$ and $t$ are relatively prime. R. Nath then gave in \cite{Nath-cores} a proof of the result in general. We give here another proof which, unlike the one given by Nath, uses Olsson's result, and provides the framework for the proof for bar-partitions.
\end{remark}

\begin{proof}
Consider a $t$-core partition $\la$. Let $g=\gcd(s, \, t)$, and write $s_0=s/g$ and $t_0=t/g$. It's a well-known fact (see e.g. \cite[Theorem 3.3]{Olsson-Combinatorics}) that there is a canonical bijection $\varphi$ between the set of hooks of length divisible by $g$ in $\la$ and the set of hooks in $q_g(\la)=(\la_0, \,  \ldots , \, \la_{g-1})$ (i.e. hooks in each of the $\la_i$'s). For each positive integer $k$ and hook $h$ of length $kg$ in $\la$, the hook $\varphi(h)$ has length $k$. Furthermore, we have $q_g(\la \setminus h) = q_g (\la) \setminus \varphi(h)$.

In particular, since $\la$ is an $t$-core, and since $t = \tit g$, we see that $q_g(\la)$ contains no $\tit$-hook, so that each $\la_i$ is an $\tit$-core.

Now, the $s$-hooks in $\la$ are in bijection with the $\tis$-hooks in $q_g(\la)$. When we remove them all, we obtain that the $s$-core $\ga_s(\la)$ has $g$-core $\ga_g(\ga_s(\la))=\ga_g(\la)$ and $g$-quotient $q_g(\ga_s(\la))=(\ga_{\tis}(\la_0), \,  \ldots \ga_{\tis}(\la_{g-1}))$. But, since $\tis$ and $\tit$ are coprime, the $\tis$-core of each $\tit$-core $\la_i$ is again a $\tit$-core (\cite[Theorem 1]{Olsson-cores}). This shows that $q_g(\ga_s(\la))$ has no $\tit$-hook, which in turn implies that $\ga_s(\la)$ contains no $t$-hook, whence is an $t$-core.

\end{proof}

As we mentionned in Section \ref{intro}, when $p$ is a prime, the study of $p$-cores is linked to that of the $p$-modular representation theory of the symmetric group $\sym_n$ (as they label the $p$-blocks of irreducible characters). When $\ell \geq 2$ is an arbitrary integer, it turns out that it is still possible to describe an $\ell$-modular representation theory of $\sym_n$ (see \cite{KOR}). The theory of $\ell$-blocks obtained in this way is in fact related to the ordinary representation theory of an Iwahori-Hecke algebra of type $\sym_n$, when specialized at an $\ell$-root of unity. K\"ulshammer, Olsson and Robinson proved in \cite{KOR} the following analogue of the Nakayama Conjecture: two characters $\chi_{\la}, \, \chi_{\mu} \in \Irr(\sym_n)$ belong to the same $\ell$-block if and only if $\la$ and $\mu$ have the same $\ell$-core.

It is therefore legimitate to study $\ell$-cores and $\ell$-blocks of partitions. In particular, we obtain from Theorem \ref{stcore} a generalization of \cite[Corollary 3]{Olsson-cores}. We call {\emph{principal}} $\ell$-block of $n$ the $\ell$-block of partitions of $n$ which contains the partition $(n)$ (i.e. the set of partitions labelling the characters of the principal $\ell$-block of $\sym_n$). 

\begin{corollary}\label{cor}
Let $r$, $s$ and $t$ be any positive integers such that $s>r \geq t$, and let $n=as+r$ for some $a \in \Z_{\geq 0}$. Then the principal $s$-block of $n$ contains no $t$-core.

\end{corollary}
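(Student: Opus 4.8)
The plan is to reduce everything to Theorem~\ref{stcore} after first identifying the $s$-core attached to the principal $s$-block. First I would compute $\ga_s((n))$. Since $(n)$ is a single row, its hook lengths are exactly $1, 2, \ldots, n$, and removing an $s$-hook from a one-row partition again yields a one-row partition (the hook simply shortens the row). Thus, successively removing the $(s)$-hooks of lengths $s, 2s, \ldots, as$ reduces $(n)$ to $(n-as)=(r)$, whose hook lengths are $1, \ldots, r$, all strictly less than $s$ because $r<s$. Hence $\ga_s((n))=(r)$, and by definition the principal $s$-block of $n$ is precisely the set of partitions of $n$ whose $s$-core equals $(r)$.

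Next I would argue by contradiction. Suppose the principal $s$-block of $n$ contains a $t$-core $\la$. Then $\ga_s(\la)=(r)$, and by Theorem~\ref{stcore} the $s$-core of the $t$-core $\la$ is itself a $t$-core; therefore $(r)$ would have to be a $t$-core. But the hook lengths of $(r)$ are $1, \ldots, r$, and since $r\geq t$ the value $t$ occurs among them (explicitly, the node $(1,\,r-t+1)$ carries a hook of length $t$). So $(r)$ possesses a $(t)$-hook and is \emph{not} a $t$-core, a contradiction. This forces the principal $s$-block of $n$ to contain no $t$-core.

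There is essentially no hard step here: the content of the statement is exactly the reduction to Theorem~\ref{stcore}, and it is crucial that this theorem requires no coprimality between $s$ and $t$. The only points needing care are elementary. The hypothesis $s>r$ guarantees that $r$ is genuinely the residue of $n$ modulo $s$, so that the core computation terminates at $(r)$ rather than overshooting; and the hypothesis $r\geq t$ is used solely in the final observation that the one-row partition $(r)$ fails to be a $t$-core. I expect the write-up to be short, with the verification of $\ga_s((n))=(r)$ being the main (though routine) computation.
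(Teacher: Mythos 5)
Your proposal is correct and takes essentially the same approach as the paper: both arguments reduce to Theorem~\ref{stcore} and then observe that a one-row partition of size at least $t$ contains a $t$-hook, hence cannot be a $t$-core. The only difference is cosmetic: you compute the $s$-core of $(n)$ explicitly as $(r)$, whereas the paper only uses that this core is some one-row partition $(m)$ with $m=bs+r\geq r\geq t$.
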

\begin{proof}
Suppose the partition $\la$ of $n$ is a $t$-core. The $s$-core $\ga$ of $\la$, which is obtained by removing $s$-hooks, must therefore be a partition of some $m$ which differs from $n$ by a multiple of $s$, i.e. $m=bs+r$ for some $b$ such that $a \geq b \geq 0$. By Theorem \ref{stcore}, $\ga$ is also a $t$-core. Now, if $\la$ was in the principal $s$-block of $n$, then its $s$-core would be the same as that of the cycle $(n)$, hence also a cycle. We would thus have $\ga=(m)$. But since $m \geq r \geq t$, the cycle $(m)$ contains a $t$-hook, hence cannot be a $t$-core.
\end{proof}

In terms of blocks of characters, this means that, if $s$, $t$ and $n$ are as above, then there is no trivial block inclusion of a $t$-block in the principal $s$-block of $\sym_n$ (see \cite{Olsson-Stanton}).

\medskip
We now prove the analogue results for bar-cores, which was proved by Olsson when $s$ and $t$ are odd and coprime (\cite[Theorem 4]{Olsson-cores}).

\begin{theorem}\label{stbarcore}
For any two odd positive integers $s$ and $t$, the $\bar{s}$-core of an $\bar{t}$-core partition is again a $\bar{t}$-core partition.

\end{theorem}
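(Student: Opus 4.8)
The plan is to adapt the proof of Theorem~\ref{stcore} to the bar setting, replacing the $g$-quotient by the $\bar{g}$-quotient and invoking \emph{both} of Olsson's coprime results. Given a $\bar{t}$-core bar-partition $\la$, set $g=\gcd(s,\,t)$; since $s$ and $t$ are odd, so is $g$, and hence the $\bar{g}$-quotient is defined. Write $s_0=s/g$ and $t_0=t/g$, which are again odd and coprime, so that both \cite[Theorem~1]{Olsson-cores} and \cite[Theorem~4]{Olsson-cores} will be available at the level of the quotient.

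The first step is to record the bar-analogue of the bijection used in Theorem~\ref{stcore}. For odd $g$, there is a canonical correspondence (see e.g.\ \cite[Chapter~1]{Olsson-Combinatorics}) between the $(g)$-bars of $\la$ and the ``hooks'' occurring in the components of $\bar{q}_g(\la)=(\la_0,\,\la_1,\,\ldots,\,\la_{(g-1)/2})$, where a bar of $\la$ is matched either to a bar of the bar-partition $\la_0$ or to an ordinary hook of one of the partitions $\la_1,\,\ldots,\,\la_{(g-1)/2}$; a $(g)$-bar of length $kg$ is matched to an object of length $k$, and this correspondence $\bar{\varphi}$ is compatible with removal, i.e.\ $\bar{q}_g(\la\setminus B)=\bar{q}_g(\la)\setminus\bar{\varphi}(B)$. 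Since $t=t_0 g$ and $\la$ is a $\bar{t}$-core, it follows that $\bar{q}_g(\la)$ contains no object of length $t_0$; concretely, $\la_0$ is a $\bar{t_0}$-core and each $\la_i$ (for $1\le i\le (g-1)/2$) is a $t_0$-core.

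Next I would remove from $\la$ all of its $(s)$-bars. By the correspondence above these are exactly the objects of length $s_0$ in $\bar{q}_g(\la)$, so the resulting $\bar{s}$-core $\bar{\ga}_s(\la)$ has $\bar{g}$-core $\bar{\ga}_g(\bar{\ga}_s(\la))=\bar{\ga}_g(\la)$ and $\bar{g}$-quotient
\[
\bar{q}_g(\bar{\ga}_s(\la))=(\bar{\ga}_{s_0}(\la_0),\,\ga_{s_0}(\la_1),\,\ldots,\,\ga_{s_0}(\la_{(g-1)/2})).
\]
Here is where the two coprime results enter: since $s_0$ and $t_0$ are odd and coprime, \cite[Theorem~4]{Olsson-cores} shows that the $\bar{s_0}$-core of the $\bar{t_0}$-core $\la_0$ is again a $\bar{t_0}$-core, while \cite[Theorem~1]{Olsson-cores} shows that the $s_0$-core of each $t_0$-core $\la_i$ is again a $t_0$-core. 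Thus every component of $\bar{q}_g(\bar{\ga}_s(\la))$ is free of objects of length $t_0$, which via the correspondence means $\bar{\ga}_s(\la)$ contains no $t$-bar, i.e.\ it is a $\bar{t}$-core.

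The delicate point is the first step: unlike the ordinary $g$-quotient, the $\bar{g}$-quotient is a mixed object consisting of one bar-partition together with $(g-1)/2$ ordinary partitions, and one must check that the bar-removal correspondence really does split the $(g)$-bars of $\la$ into bars of $\la_0$ and hooks of the $\la_i$ in a length- and removal-compatible way (in particular tracking the types and signs of the bars arising in the bar-partition component). Once this bookkeeping is in place, the argument runs exactly parallel to the core case, the only genuine novelty being that the bar-partition component $\la_0$ is handled by Olsson's bar result \cite[Theorem~4]{Olsson-cores} rather than by \cite[Theorem~1]{Olsson-cores}.
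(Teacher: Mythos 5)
Your proof is correct and follows essentially the same route as the paper's own argument: both pass to the $\bar{g}$-quotient for $g=\gcd(s,\,t)$ via the canonical length-dividing, removal-compatible bijection between $(g)$-bars of $\la$ and bars/hooks of the quotient components (the paper cites \cite[Theorem 4.3]{Olsson-Combinatorics} for exactly the bookkeeping you flag as delicate), reduce to the coprime pair $(s_0,\,t_0)$, and then apply \cite[Theorem 4]{Olsson-cores} to the bar-partition component $\la_0$ and \cite[Theorem 1]{Olsson-cores} to the ordinary partition components $\la_i$. Nothing essential differs.
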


\begin{proof}
Take any $\bar{t}$-core $\la$. Let $g=\gcd(s, \, t)$, and write $s_0=s/g$ and $t_0=t/g$. There is a canonical bijection $\varphi$ between the set of bars of length divisible by $g$ in $\la$ and the set of bars in its $\bar{g}$-quotient $\bar{q}_g(\la)=(\la_0, \, , \, \la_1, \,   \ldots , \, \la_{(g-1)/2})$, where a bar in $\bar{q}_g(\la)$ is either a bar in the bar-partition $\la_0$ or a hook in one of the partitions $\la_1, \,   \ldots , \, \la_{(g-1)/2}$ (see \cite[Theorem 4.3]{Olsson-Combinatorics}). For each positive integer $k$ and bar $b$ of length $kg$ in $\la$, the bar $\varphi(b)$ has length $k$. Furthermore, we have $\bar{q}_g(\la \setminus b) = \bar{q}_g (\la) \setminus \varphi(b)$.

The same argument as in the proof of Theorem \ref{stcore} thus proves that $\la_0$ is an $\bar{\tit}$-core, that each $\la_i$ ($1 \leq i \leq (g-1)/2$) is an $\tit$-core, and that the $\bar{s}$-core $\bar{\ga}_s(\la)$ of $\la$ has $\bar{g}$-quotient $\bar{q}_g(\bar{\ga}_s(\la))=(\bar{\ga}_{\tis}(\la_0), \, \ga_{\tis}(\la_1), \,   \ldots \ga_{\tis}(\la_{(g-1)/2}))$. And, since $\tis$ and $\tit$ are coprime, the $\tis$-core of each $\tit$-core $\la_i$ ($1 \leq i \leq (g-1)/2$) is again a $\tit$-core (\cite[Theorem 1]{Olsson-cores}), and the $\bar{\tis}$-core of the $\bar{\tit}$-core $\la_0$ is again a $\bar{\tit}$-core (\cite[Theorem 4]{Olsson-cores}). This shows that the $\bar{g}$-quotient of $\bar{\ga}_s(\la)$ contains no $\tit$-bar, which finally implies that $\bar{\ga}_s(\la)$ contains no $t$-bar, whence is an $\bar{t}$-core.

\end{proof}

In analogy with the partition case, we call {\emph{principal}} $\bar{\ell}$-block of bar-partitions of $n$ (for $\ell$ odd) the $\bar{\ell}$-block containing the bar-partition $(n)$. Then the same argument as for the proof of Corollary \ref{cor} yields

\begin{corollary}
Let $r$, $s$ and $t$ be any positive integers such that $s$ and $t$ are odd and $s>r \geq t$, and let $n=as+r$ for some $a \in \Z_{\geq 0}$. Then the principal $\bar{s}$-block of $n$ contains no $\bar{t}$-core.

\end{corollary}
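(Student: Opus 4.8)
The plan is to run the proof of Corollary~\ref{cor} essentially verbatim, trading hooks for bars and invoking Theorem~\ref{stbarcore} in place of Theorem~\ref{stcore}. Suppose, for contradiction, that some $\bar{t}$-core bar-partition $\la$ of $n$ lies in the principal $\bar{s}$-block. First I would apply Theorem~\ref{stbarcore} to conclude that its $\bar{s}$-core $\bar{\ga}_s(\la)$ is again a $\bar{t}$-core. On the other hand, membership of $\la$ in the principal $\bar{s}$-block means, by definition, that $\la$ shares the $\bar{s}$-core of the bar-partition $(n)$; hence $\bar{\ga}_s(\la)=\bar{\ga}_s((n))$. The goal is then to exhibit inside $\bar{\ga}_s((n))$ a bar whose length is divisible by $t$, which contradicts its being a $\bar{t}$-core.

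The crux is therefore to compute $\bar{\ga}_s((n))$. Here I would use that the bar-lengths occurring in the single row of the one-part bar-partition $(m)$ are exactly $1,2,\ldots,m$, so that removing an $s$-bar from $(m)$ simply yields $(m-s)$ whenever $m\geq s$. Iterating, the $(s)$-bars of $(n)$ are stripped off through the chain $(n)\to(n-s)\to\cdots$, which terminates at the single part $(r)$, where $r$ is the residue of $n$ modulo $s$; this is precisely the $r$ of the statement, since $0\le r<s$. As $(r)$ has bar-lengths $1,2,\ldots,r$ and $r\geq t$, it contains a bar of length exactly $t$, hence a $(t)$-bar. Thus $\bar{\ga}_s((n))=(r)$ fails to be a $\bar{t}$-core, contradicting the conclusion of the first paragraph, and no $\bar{t}$-core can lie in the principal $\bar{s}$-block.

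The main obstacle, relative to the partition case of Corollary~\ref{cor}, is that bar-removal is genuinely more delicate than hook-removal: a general bar of length divisible by $\ell$ may be removed either by shortening a single part or by an operation that simultaneously deletes or merges two parts (the same dichotomy that produces the two kinds of objects in $\bar{q}_g$ in the proof of Theorem~\ref{stbarcore}). I would therefore take care to justify that, for the \emph{one-row} bar-partition $(m)$, no merging is possible — there is only a single part — so the bar structure degenerates to $\{1,\ldots,m\}$ and the $\bar{s}$-core computation reduces to the trivial chain above. Once this degeneration is in place, the argument is formally identical to that of Corollary~\ref{cor}.
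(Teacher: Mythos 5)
Your proof is correct and takes essentially the same route as the paper, which simply declares that ``the same argument as for the proof of Corollary~\ref{cor}'' applies: invoke Theorem~\ref{stbarcore}, note that membership in the principal $\bar{s}$-block forces the $\bar{s}$-core to equal that of $(n)$, a one-row bar-partition of size at least $t$, and derive a contradiction from the $(t)$-bar it contains. Your explicit check that bar-removal from a one-row bar-partition involves no part-merging, so that $\bar{\ga}_s((n))=(r)$, is a detail the paper leaves implicit but does not change the substance of the argument.
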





\bibliographystyle{plain}
\bibliography{referencesGN}

\end{document}